\documentclass[12pt,reqno]{amsart}

\setlength{\textheight}{23cm}
\setlength{\textwidth}{16cm}
\setlength{\topmargin}{-0.8cm}
\setlength{\parskip}{0.3\baselineskip}
\hoffset=-1.4cm

\usepackage{hyperref}

\usepackage[all]{xy}

\xymatrixcolsep{2cm}

\numberwithin{equation}{section}

\allowdisplaybreaks[1]

\usepackage{etoolbox}

\makeatletter

\patchcmd{\thesubsection}{\arabic}{\Alph}{}{}

\patchcmd{\@seccntformat}{\@secnumfont}{%
\@secnumfont\expandafter\protect\csname format#1\endcsname}{}{}

\patchcmd{\@startsection}{\@afterindenttrue}{\@afterindentfalse}{}{}

\patchcmd{\subsection}{-.5em}{.3\linespacing}{}{}

\makeatother

\theoremstyle{plain}

\newtheorem{theorem}{Theorem}[section]

\newtheorem{lemma}[theorem]{Lemma}

\newtheorem{corollary}[theorem]{Corollary}

\theoremstyle{remark}

\newcommand{\SKer}[1]{\ensuremath{\mathcal{K}er (#1)}}

\newcommand{\At}[1]{\ensuremath{\mathrm{At} (#1)}}

\newcommand{\Img}[1]{\ensuremath{\mathrm{Im} (#1)}}

\newcommand{\cat}[1]{\ensuremath{\mathcal{#1}}}

\newcommand{\END}[2][]{\ensuremath{\mathcal{E}\mathit{nd}_{#1} (#2)}}

\newcommand{\id}[1]{\ensuremath{\mathbf{1}_{#1}}}

\newcommand{\p}{\ensuremath{\mathbf{P}}}

\newcommand{\C}{\ensuremath{\mathbb{C}}}

\newcommand{\struct}[1]{\ensuremath{\mathcal{O}_{#1}}}

\newcommand{\DIFF}[4][]{\ensuremath{\mathcal{D}\mathit{iff}^{#1}_{#2}(#3,\,#4)}}

\newcommand{\Diff}[4][]{\ensuremath{\mathrm{Diff}^{#1}_{#2}(#3,\,#4)}}

\newcommand{\coh}[3]{\ensuremath{\mathrm{H}^{#1}(#2,\,#3)}}

\newcommand{\Aut}[2][]{\ensuremath{\mathrm{Aut}_{#1} (#2)}}

\baselineskip=15.5pt

\begin{document}

\title[differential operators on Hitchin  variety]{ differential operators on Hitchin variety}

%\author[I. Biswas]{Indranil Biswas}
%
%\address{School of Mathematics, Tata Institute of Fundamental Research,
%Homi Bhabha Road, Mumbai 400005, India}
%
%\email{indranil@math.tifr.res.in}

\author[A. Singh]{Anoop Singh}

\address{Harish-Chandra Research Institute, HBNI, Chhatnag Road, Jhusi,
Prayagraj 211019, India}

\email{anoopsingh@hri.res.in}

\subjclass[2010]{32C38, 14F10, 14D06, 53C05 }

\keywords{Differential operator, Hitchin variety, holomorphic connection.}

\begin{abstract}
We introduce the notion of Hitchin variety over $\C$. 
Let $L$ be a holomorphic line bundle over a Hitchin 
variety $X$. We investigate
the space of all global sections of sheaf of differential operators $\cat{D}^k (L)$ and symmetric powers of sheaf of 
first order differential operators $\cat{S}^k(\cat{D}^1 (L))$  over  $X$ and show that for a projective Hithcin variety both the spaces  are one dimensional. As an application, we show that the space $\cat{C}(L)$ of holomorphic connections  on  $L$  does not admit any non-constant regular function.
\end{abstract}

\maketitle

\section{Introduction}

Let $C$ be a smooth projective curve over $\C$ of genus $g \geq 2$.
Let \cat{M} := \cat{M}($r$, $d$) be the moduli space of stable vector bundles of rank
$r$ and degree $d$ on C, where $r$ and $d$ are coprime. Then \cat{M} is a smooth 
projective variety  of dimension $r^2(g-1)+1$. In \cite{H},  Hitchin proved that the cotangent bundle 
$T^*\cat{M}$ is an algebraically
completely integrable Hamiltonian system, that is, we get a Hitchin fibration (for more details see \cite{H}).
Let $\cat{M}_{Higgs}$ denote the moduli space of stable
Higgs bundles of rank $r$ and degree $d$ over $C$.
Then $T^*\cat{M}$ is an open subset of $\cat{M}_{Higgs}$
with codimension  of $\cat{M}_{Higgs} \setminus T^*\cat{M}$  in $\cat{M}_{Higgs}$ is at least $2$.
 Motivated by the properties of 
Hitchin fibration, we introduce the notion of 
{\bf Hitchin variety}, which is defined in  section \ref{sec:diff}.  We give one more example of Hitchin 
variety in section \ref{sec:diff}.

The study of space of global sections of sheaves over an 
algebraic variety is always an interesting but a difficult
problem. In \cite{B}, Biswas studied the global sections
of sheaves of differential operator on a polarised abelian variety. He showed that the space of global 
sections of sheaf of differential operators on an ample line bundle over an abelian variety is of dimension one.
Motivated by this and some specific properties of a
Hitchin variety, we shall study  the space of global 
sections of sheaf of differential operators on any holomorphic line bundle and
also on  
holomorphic vector bundle, using 
different techniques.

In \cite[Theorem 6.2, p.n. 110]{H}, Hitchin computed the
global sections of the symmetric powers of tangent bundle of the moduli space of stable vector bundles 
with fixed determinant, rank $2$ and odd degree over a
compact Riemann surface of genus $g \geq 2$.
 
In \cite[Lemma 4.1, p.n. 428]{B1}, the global sections
of symmetric powers of Atiyah algebra of a generalized 
theta line bundle $\Theta$ over the moduli space
$\cat{N}^s_C(L)$ of 
stable vector bundles of rank $r$ with fixed determinant 
$L$  of degree $d$, where $r$ and $d$ are coprime, have 
been studied. In \cite[Theorem 1.4]{AS}, a similar method has been 
used to compute the algebraic functions on the moduli space of logarithmic connections singular over a finite subset of a compact Riemann 
surface with fixed residues.

To the best of our knowledge, 
the global sections of sheaves of differential operators
on a holomorphic line bundle over the moduli space 
$\cat{M} = \cat{M}(r,d)$ have not been studied.

Let $X$ be a Hitchin variety (see the definition in the 
beginning of section \ref{sec:diff}) and $L$ be a 
holomorphic 
line bundle over $X$. For $k \geq 0$, we denote by 
$\cat{D}^k(L)$ the vector bundle over $X$ 
defined by the sheaf $\DIFF[k]{X}{L}{L}$ of differential operators on $L$.  In Theorem \ref{thm:1}, we show that
\begin{equation*}
\label{eq:1.1}
 \coh{0}{X}{\cat{S}^k(\cat{D}^1 (L))} \cong \coh{0}{X}{\struct{X}} 
\end{equation*}
for every $k \geq 0$, where $\cat{S}^k(\cat{D}^1 (L))$
denotes the $k$-th symmetric power of $\cat{D}^1(L)$.
Moreover, if $X$ is a projective Hitchin variety, then 
we have  (see Corollary \eqref{cor:1})
\begin{enumerate}
\item \label{eq:30.1} \coh{0}{X}{\cat{S}^k(\cat{D}^1 (L))} = \C
\item  \label{eq:31.1} \coh{0}{X}{\cat{D}^k (L)} = \C
\end{enumerate}

Also, if $E$ is a holomorphic vector bundle over $X$,
then in Theorem \ref{thm:2}, we state the same result
for vector bundle $\cat{B}(E)$ constructed in \eqref{eq:38.6}.
As a consequence of Theorem \ref{thm:2}, for a projective
Hitchin variety $X$, we get that (see Corollary \eqref{cor:2})
\begin{equation*}
\coh{0}{X}{\cat{S}^k(\cat{B}(E))} = \C.
\end{equation*}

In the last section \ref{conn}, we consider the variety $\cat{C}(L)$ of holomorphic connections on $L$,
and in Theorem \ref{thm:3}, we prove that 
$\coh{0}{\cat{C}(L)}{\struct{\cat{C}(L)}} = \C$.

\section{Differential operators on line bundles}
\label{sec:diff}
 A smooth algebraic variety $X$
over $\C$  of dimension $n$ is said to be {\bf Hitchin variety} if 
there exist  normal noetherian schemes $\cat{N}$, $V$
over $\C$, where $V$ is  of 
dimension $n$, and a 
surjective proper  morphism 
\begin{equation}
\label{eq:1}
\cat{H}: \cat{N} \to V
\end{equation}
such that the following properties are satisfied.
\begin{enumerate}
\item The cotangent bundle $T^*X$ is an open subset of $\cat{N}$ with codimension of the
compliment $\cat{N} \setminus T^*X$ in $\cat{N}$ is 
 at least $2$.
\item A {\bf generic fibre} of $\cat{H}$ is of the form $$\cat{H}^{-1}(v) = A_v$$
such that $$\cat{H}^{-1}(v) \cap T^*X = A_v \setminus F_v$$
where 
$v \in V,$ 
$A_v$ is some abelian variety and $F_v$ is a closed 
subvariety of 
$A_v$ with $\mbox{codim}(F_v,A_v) \geq 2$.
\end{enumerate}

Note that the tangent bundle of an abelian variety is 
trivial, hence the vector fields on generic fibres 
of $\cat{H}$ contained in $T^*X$ are constant, which follows from Hartogs' theorem.

Such a phenomenon occurs naturally in the study of 
moduli space of stable vector bundles 
over a compact Riemann surface, 
where the cotangent bundle is an algebraically
completely integrable Hamiltonian system,
and is contained as an open subset in the moduli space 
of stable Higgs bundles with the codimension of its 
compliment in the moduli space of stable Higgs bundles is 
at least $2$ (see  \cite{H}).
The property mentioned in the definition of Hitchin 
variety is known as algebraic complete integrability.
See \cite[Definition 1]{PB} for a general definition of finite-dimensional complex algebraic integrable Hamiltonian system.

Another example of Hitchin variety (see \cite[section 4, p.n. 1185]{B2}) is as follows.

Let $Y$ be a compact Riemann surface of genus 
$g \geq 3$. Let $G$ be a non-trivial connected 
semi-simple linear algebraic group over $\C$ with Lie 
algebra $\mathfrak{g}$. Let $\gamma \in \pi_{1}(G)$
be the topological type of a holomorphic principal
$G$-bundles $E_{G}$ over $Y$. Let $\cat{M}^{\gamma}(Y,G)$
denote the moduli space of semi-stable holomorphic 
principal $G$-bundle $E_G$ over $Y$ of topological 
type $\gamma$.
A stable principal $G$–bundle $E_G$ over $Y$ is called {\bf regularly stable} if the automorphism group $\Aut{E_G}$ is just the center of $G$. 
The regularly stable locus
$$\cat{M}^{\gamma,rs}(Y, G) \subseteq \cat{M}^{\gamma}(Y, G)$$
is open, and coincides with the smooth locus of $\cat{M}^{\gamma}(Y, G)$; see [\cite{B3}, Corollary 3.4].
Then the moduli space $\cat{M}^{\gamma,rs}(Y, G)$
is a Hitchin variety, because $\cat{M}^{\gamma,rs}(Y, G)$ is a smooth quasi-projective variety with
$T^*\cat{M}^{\gamma,rs}(Y, G) \subseteq \cat{M}^{\gamma,rs}_{Higgs}(Y, G),$
where $\cat{M}^{\gamma,rs}_{Higgs}(Y, G)$ denotes the moduli space of regularly stable Higgs $G$–
bundles $(E_G, \theta)$ over $Y$ with $E_G$ of topological type $\gamma \in  \pi_1(G)$, and 
 
$$
\cat{H}: \cat{M}^{\gamma,rs}_{Higgs}(Y, G) \longrightarrow  V=
\bigoplus_{i=1}^{rank(G)}  \coh{0}{Y}{K^{\otimes n_i}_Y} $$ is the Hitchin map,
where $n_i$ are the degree of generators for the algebra 
$Sym(\mathfrak{g}^*)^{G}$; see \cite[section 4]{H},\cite{L}.

We state a version of Zariski's main theorem,
which can be proved using Stein factorisation. We use the following lemma  
to prove Theorem \ref{thm:1}.

\begin{lemma}
\label{lemm:1}
Let $T$, $S$ be two noetherian schemes over $\C$.
Let $\pi : T \to S$ be a surjective proper morphism such
that the generic fibres are connected. Suppose that $S$ is normal. Then the natural morphism of sheaves $
\pi^{\sharp}: \struct{S}\longrightarrow \pi_* \struct{T},$
is an isomorphism and every fibre of $\pi$ is connected.
\end{lemma}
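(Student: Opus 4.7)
The plan is to invoke Stein factorisation. Since $\pi$ is a proper morphism between noetherian schemes, the pushforward $\pi_{*}\struct{T}$ is a coherent $\struct{S}$-algebra, and one obtains the canonical factorisation
\[
\pi : T \xrightarrow{\pi''} S' \xrightarrow{\pi'} S,
\]
where $S' := \mathbf{Spec}_{S}(\pi_{*}\struct{T})$, the morphism $\pi'$ is finite with $\pi'_{*}\struct{S'} = \pi_{*}\struct{T}$, and $\pi''$ is proper with $\pi''_{*}\struct{T} = \struct{S'}$ and connected fibres. My goal therefore reduces to showing that $\pi'$ is an isomorphism, because this immediately yields that $\pi^{\sharp}$ is an isomorphism and makes the fibres of $\pi$ coincide with the connected fibres of $\pi''$.

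To prove $\pi'$ is an isomorphism I would first establish that it is birational. Surjectivity of $\pi'$ follows from surjectivity of $\pi$. For any $s \in S$, the set $(\pi')^{-1}(s)$ is finite, say $\{s'_{1}, \ldots, s'_{k}\}$, and the fibre of $\pi$ decomposes as
\[
\pi^{-1}(s) \; = \; \bigsqcup_{i=1}^{k} (\pi'')^{-1}(s'_{i}),
\]
where each piece $(\pi'')^{-1}(s'_{i})$ is connected. Hence $\pi^{-1}(s)$ has exactly $k$ connected components. The hypothesis that the generic fibres of $\pi$ are connected then forces $k = 1$ over the generic point of every irreducible component of $S$, so $\pi'$ has generic degree one and is therefore birational.

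Since $S$ is normal, each of its connected components is irreducible and normal. I would then invoke the classical form of Zariski's main theorem stating that a finite birational morphism onto a normal scheme is an isomorphism, applied component by component, to conclude that $\pi'$ itself is an isomorphism. Combining this with the Stein factorisation above, the natural map $\pi^{\sharp} : \struct{S} \to \pi_{*}\struct{T} = \pi'_{*}\struct{S'}$ is an isomorphism, and for every $s \in S$ the fibre $\pi^{-1}(s)$ coincides with $(\pi'')^{-1}(s)$, which is connected.

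The main obstacle I anticipate is the translation from the hypothesis ``generic fibres of $\pi$ are connected'' into birationality of the finite part $\pi'$ of the factorisation, together with keeping track of possibly reducible $S$ (where one must read ``generic'' as generic on each component). Once this reduction is in place, the appeal to Zariski's main theorem using normality of $S$ is routine.
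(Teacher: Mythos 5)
Your overall strategy coincides with the paper's: the paper gives no proof of this lemma beyond the remark that it ``can be proved using Stein factorisation'', and your Stein-factorisation-plus-Zariski's-main-theorem argument is exactly the intended one. The reduction to showing that the finite part $\pi'$ is an isomorphism, the identification of the connected components of $\pi^{-1}(s)$ with the fibres of $\pi''$ over the finitely many points of $(\pi')^{-1}(s)$, and the appeal to the finite-birational-onto-normal form of Zariski's main theorem are all sound.

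The one step that needs more care is the inference ``$k=1$ over the generic point, so $\pi'$ has generic degree one and is therefore birational.'' A single point in the fibre of a finite morphism over a generic point $\eta$ does not imply degree one: that fibre is $\mathrm{Spec}$ of a finite $k(\eta)$-algebra, which is connected (a single point) whenever the algebra is local, for instance a nontrivial field extension. Concretely, for $\pi:\mathbb{P}^1\to\mathbb{P}^1$, $z\mapsto z^2$, the scheme-theoretic generic fibre $\mathrm{Spec}\,\C(t)[z]/(z^2-t)$ is connected, yet here $\pi'=\pi$ is not birational. To close this gap one must read ``generic fibre'' as ``fibre over a general closed point'': there the residue field is $\C$, so a dense set of closed points with $k=1$ forces $[K(S'):K(S)]=1$ in characteristic zero, provided $S'$ is reduced --- which in turn requires $T$ (or at least $\pi_*\struct{T}$) to be reduced, a hypothesis missing from the lemma as stated but satisfied in the paper's application, where $T=\cat{N}$ is normal. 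With that reading and that reducedness in hand, the rest of your argument goes through as written.
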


Let $L$ be a holomorphic line bundle over $X$.
We follow \cite{GD} and \cite{R} for the definition and 
properties of differential operators.

Let $k \geq 0$ be an integer.
 A differential operator of order $k$ is a $\C$-linear 
map
\begin{equation}
\label{eq:2}
P: L \to L
\end{equation}
such that for every open subset $U$ of $X$ and for every 
$f \in \struct{X}(U)$, the bracket 
$$[P|_U,f]:L|_U \to L|_U$$ defined as 
\begin{equation*}
\label{eq:67}
[P|_U,f]_V(s) = P_V(f|_V s) - f|_V P_V(s)
\end{equation*}
is a differential operator of order $k-1$, for every open subset $V$ of 
$U$, and for all $ s \in L(V)$,
where differential operator of order zero from $L$ to $L$ is just 
\struct{X}-module homomorphism.

Let $\Diff[k]{X}{L}{L}$ denote the set of all differential operator of order 
$k$.  Then 
$\Diff[k]{X}{L}{L}$ is an 
$\struct{X}(X)$-module. For every open subset $U$ of $X$, 
$U \mapsto \Diff[k]{X}{L|_U}{L|_U}$ is a sheaf of differential operator of 
order $k$ from $L|_U$ to 
$L|_U$. This sheaf is denoted by $\DIFF[k]{X}{L}{L}$, which is locally free.

For $k \geq 0$, we denote by $\cat{D}^k(L)$ the vector bundle over $X$ 
defined by the sheaf $\DIFF[k]{X}{L}{L}$. Note that $\cat{D}^0(L) = 
\struct{X}$ and we have following inclusion of vector bundles
\begin{equation}
\label{eq:3}
\struct{X} = \cat{D}^0(L) \subset \cdots \subset \cat{D}^k(L) \subset \cat{D}^{k+1}(L) \subset \cdots
\end{equation}                                                                        

We have exact sequence of vector bundles 
\begin{equation}
\label{eq:4}
0 \to \cat{D}^{k-1}(L) \to \cat{D}^{k}(L) \xrightarrow{\sigma_k} \cat{S}
^k(TX) \to 0,
\end{equation}
where $\cat{S}^k(TX)$ is the $k$-th symmetric power of the tangent bundle of $X$
 and $\sigma_k$ is the symbol homomorphism. 
It is called symbol exact sequence (see \cite{R}, Chapter 2, for more 
details).

Consider the first order symbol operator 
\begin{equation*}
\label{eq:5}
\sigma_1: \cat{D}^1 (L)\to   TX
\end{equation*}
given in symbol exact sequence \eqref{eq:4}.
This induces a morphism 
\begin{equation*}
\label{eq:6}
\cat{S}^k(\sigma_1): \cat{S}^k(\cat{D}^1 (L))  \to 
\cat{S}^k (TX) 
\end{equation*}
of $k$-th symmetric powers.
Now, because of the following composition 
\begin{equation*}
\cat{S}^{k-1} (\cat{D}^1 (L)) = \struct{X} \otimes \cat{S}^{k-1}(\cat{D}^1 (L)) \hookrightarrow 
\cat{D}^1 (L) \otimes \cat{S}^{k-1} (\cat{D}^1 (L)) \to 
\cat{S}^{k} (\cat{D}^1 (L)),
\end{equation*}
we have 
\begin{equation*}
\label{eq:7}
\cat{S}^{k-1}(\cat{D}^1 (L))  \subset \cat{S}^k (\cat{D}^1 (L))
~~~ \mbox{for all}~ k \geq 1.
\end{equation*}

Thus, we get a short exact sequence of vector bundles over
$X$,
\begin{equation}
\label{eq:8}
0 \to \cat{S}^{k-1}(\cat{D}^1 (L)) \to \cat{S}^{k}(\cat{D}^1 (L)) \xrightarrow{\cat{S}^k (\sigma_1)} \cat{S}^k (TX) \to 0.
\end{equation}
 
 In other words, we get a filtration 
 \begin{equation}
 \label{eq:9}
 0 \subset \cat{S}^0(\cat{D}^1 (L)) \subset \cat{S}^1(\cat{D}^1 (L)) \subset \ldots \subset \cat{S}^{k-1}(\cat{D}^1 (L)) \subset \cat{S}^k(\cat{D}^1 (L)) \subset \ldots
 \end{equation}

 such that 
 \begin{equation}
 \label{eq:10}
 \cat{S}^{k}(\cat{D}^1 (L)) / \cat{S}^{k-1}(\cat{D}^1 (L)) \cong \cat{S}^k (TX)~~~ \mbox{for all}~ k \geq 1.
 \end{equation}
 
 Above filtration in \eqref{eq:9} gives the following 
 increasing chain of $\C$-vector spaces
 
 \begin{equation}
 \label{eq:11}
 \coh{0}{X}{\struct{X}} \subset \coh{0}{X}{\cat{S}^1(\cat{D}^1 (L))}
 \subset \coh{0}{X}{\cat{S}^2(\cat{D}^1 (L))} \subset  \ldots
 \end{equation}

\begin{theorem}
\label{thm:1} Let $X$ be a Hitchin variety. Then,
for $k \geq 0$, the inclusion homomorphism 
\begin{equation}
\label{eq:12}
\coh{0}{X}{\struct{X}} \to \coh{0}{X}{\cat{S}^k(\cat{D}^1 (L))}
\end{equation}
is an isomorphism.
\end{theorem}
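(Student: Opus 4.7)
The plan is to proceed by induction on $k$, using the filtration \eqref{eq:9} with associated graded pieces given by \eqref{eq:8} and \eqref{eq:10}. The case $k = 0$ is immediate since $\cat{S}^0(\cat{D}^1 (L)) = \struct{X}$. For the inductive step, assume the conclusion for $k - 1$. The long exact sequence in cohomology associated to the symbol sequence \eqref{eq:8} begins
\begin{equation*}
0 \to \coh{0}{X}{\cat{S}^{k-1}(\cat{D}^1 (L))} \to \coh{0}{X}{\cat{S}^{k}(\cat{D}^1 (L))} \xrightarrow{\cat{S}^k(\sigma_1)_*} \coh{0}{X}{\cat{S}^k(TX)},
\end{equation*}
and the inductive hypothesis identifies the first term with $\coh{0}{X}{\struct{X}}$, sitting inside $\coh{0}{X}{\cat{S}^{k}(\cat{D}^1 (L))}$ exactly as the image of the inclusion in \eqref{eq:12}. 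It therefore suffices to prove that the induced symbol map $\cat{S}^k(\sigma_1)_*$ on global sections has zero image.

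To prove this vanishing I would transfer the problem to a geometric statement on $T^*X$. A global section $\eta \in \coh{0}{X}{\cat{S}^k(TX)}$ is the same datum as a regular function $\widetilde{\eta}: T^*X \to \mathbb{A}^1_{\C}$ that is fibrewise homogeneous of degree $k$. Because $\cat{N}$ is normal and $\cat{N} \setminus T^*X$ has codimension at least two, algebraic Hartogs extends $\widetilde{\eta}$ uniquely to a regular function $\widehat{\eta}$ on $\cat{N}$. The Hitchin morphism $\cat{H}: \cat{N} \to V$ is proper surjective with connected generic fibres (the abelian varieties $A_v$), so Lemma \ref{lemm:1} produces the isomorphism $\struct{V} \cong \cat{H}_{*} \struct{\cat{N}}$; hence $\widehat{\eta} = g \circ \cat{H}$ for a unique $g \in \coh{0}{V}{\struct{V}}$, and in particular $\widetilde{\eta}$ is constant on every generic fibre $A_v \setminus F_v$ of $\restrict{\cat{H}}{T^*X}$.

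To finish the induction, one must then force this constancy, combined with the degree-$k$ homogeneity of $\widetilde{\eta}$ along the fibres of $T^*X \to X$, to yield $\widetilde{\eta} \equiv 0$. This is where I expect the main obstacle. The two foliations of $T^*X$---by fibres of $T^*X \to X$ (on which $\widetilde{\eta}$ is a homogeneous polynomial of positive degree) and by fibres of $\cat{H}$ (on which $\widetilde{\eta}$ is locally constant)---are not directly compared by the bare axioms of a Hitchin variety. The natural rigidity input to finish is the $\units{\C}$-scaling on $T^*X$ together with its compatibility with $\cat{H}$: in the classical Hitchin setup the base $V$ carries a grading with positive weights $n_i \geq 2$, and any $g$ of weighted degree $k$ whose pullback is homogeneous on the cotangent fibres must vanish at the image of the zero section; combined with the fact that a generic fibre $A_v$ dominates $X$, this should force $g$ identically zero, hence $\widetilde{\eta} = 0$, closing the induction. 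Extracting this rigidity cleanly from the abstract axioms of a Hitchin variety is the technical crux of the argument.
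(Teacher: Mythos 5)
Your reduction is fine as far as it goes: by the long exact sequence of \eqref{eq:8} and induction, it suffices to show that the symbol map $\cat{S}^k(\sigma_1)$ has zero image on global sections. But the way you propose to get this vanishing --- by showing that every $\eta \in \coh{0}{X}{\cat{S}^k(TX)}$ is itself zero --- cannot work, and this is a genuine gap, not just a technical crux. Your own descent argument shows why: it identifies $\bigoplus_{k\geq 0}\coh{0}{X}{\cat{S}^k(TX)} = \coh{0}{T^*X}{\struct{T^*X}}$ with $\coh{0}{V}{\struct{V}}$, and in the motivating example $V$ is the (affine) Hitchin base, so this ring is a polynomial ring in $\dim X$ variables; the Hitchin Hamiltonians give plenty of nonzero sections of $\cat{S}^k(TX)$ for $k\geq 1$ (this is exactly Hitchin's Theorem 6.2 cited in the introduction). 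Your closing heuristic is also incorrect on its own terms: a weighted-homogeneous $g$ on $V$ of positive degree vanishes at the origin but is not identically zero, and constancy of $g\circ\cat{H}$ on a fibre $A_v$ only says the value there is $g(v)$, which need not vanish. A cleaner way to see that no argument ignoring $L$ can succeed: for $L=\struct{X}$ the Atiyah sequence \eqref{eq:15} splits, so $\coh{0}{X}{\cat{S}^k(\cat{D}^1(L))}$ contains $\coh{0}{X}{\cat{S}^k(TX)}$ as a direct summand, and the image of the symbol map is all of $\coh{0}{X}{\cat{S}^k(TX)}$ rather than zero.

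The missing idea is that the image of $\cat{S}^k(\sigma_1)$ on $\coh{0}{X}{-}$ is the kernel of the connecting homomorphism $\delta'_k:\coh{0}{X}{\cat{S}^kTX}\to\coh{1}{X}{\cat{S}^{k-1}(\cat{D}^1(L))}$, and one must prove $\delta'_k$ is \emph{injective} rather than prove its source is zero. This is where $L$ enters: via the commutative diagram \eqref{eq:cd1}, injectivity of $\delta'_k$ follows from injectivity of $\delta_k:\coh{0}{X}{\cat{S}^kTX}\to\coh{1}{X}{\cat{S}^{k-1}TX}$, which is cup product with the extension class of \eqref{eq:14}, i.e.\ with $c_1(L)$ followed by the contraction $\cat{S}^kTX\otimes T^*X\to\cat{S}^{k-1}TX$. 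The paper then reuses exactly the descent machinery you set up (Hartogs on $\cat{N}$, Lemma \ref{lemm:1}, the identification of $\coh{0}{T^*X}{\struct{T^*X}}$ with functions on $V$, and the triviality of vector fields on the abelian fibres) to transport a nonzero $\omega = d\tilde{g}$ to a section of the relative tangent sheaf $T_{\cat{H}}$, cups with $c_1(\eta^*L)$, and detects nonvanishing by restricting to a generic fibre $A_u\setminus F_u$ with $\omega(u)\neq 0$ and using $\coh{1}{A_u}{\struct{A_u}}\cong\coh{0}{A_u}{TA_u}$. So your geometric setup on $T^*X$ is the right half of the argument, but the cohomological half --- the cup product with $c_1(L)$ and its injectivity on the abelian fibres --- is absent, and the statement you hoped would replace it is false.
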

\begin{proof}
From isomorphism in \eqref{eq:10}, we have the following commutative diagram
\begin{equation}
\label{eq:cd1}
\xymatrix{
0 \ar[r] & \cat{S}^{k-1}(\cat{D}^1 (L)) \ar[d] \ar[r] & \cat{S}
^k(\cat{D}^1 (L))
\ar[d] \ar[r]^{\cat{S}^k(\sigma_1)} & \cat{S}^k(TX) \ar[d] \ar[r] & 0 \\
0 \ar[r] & \cat{S}^{k-1}(TX) \ar[r] & 
\frac{\cat{S}^k(\cat{D}^1 (L))}{\cat{S}^{k-2}(\cat{D}^1 (L))} \ar[r] & \cat{S}^k(TX) \ar[r] & 0 
}
\end{equation}

which gives rise to the following commutative 
diagram of long exact sequences
\begin{equation}
\label{eq:cd2}
\xymatrix{
\cdots \ar[r] & \coh{0}{X}{\cat{S}^k TX} \ar[d] \ar[r]^{\delta'_k} & \coh{1}
{X}{\cat{S}^{k-1}(\cat{D}^1 (L))} \ar[d] 
\ar[r] & \cdots  \\
\cdots \ar[r] & \coh{0}{X}{\cat{S}^k TX}       \ar[r]^{\delta_k} & \coh{1}
{X}{\cat{S}^{k-1} TX} 
\ar[r] & \cdots }
\end{equation}
Note that proving  \eqref{eq:12} is equivalent to show the following
\begin{equation*}
\label{eq:13}
\coh{0}{X}{\cat{S}^{k-1}(\cat{D}^1 (L))}
\cong \coh{0}{X}{\cat{S}^k(\cat{D}^1 (L))}
~~~ \mbox{for all}~ k \geq 1.
\end{equation*}

Now, from above commutative diagram \eqref{eq:cd2}, it is enough 
to show that 
the connecting homomorphism $\delta'_k$ is injective for all $k
\geq 1$, which is equivalent to showing that 
the connecting homomorphism 
\begin{equation}
\delta_k: \coh{0}{X}{\cat{S}^k TX}      \to  \coh{1}{X}
{\cat{S}^{k-1} TX}
\end{equation}
is injective for every $k \geq 1$.

Moreover, a connecting homomorphism can be expressed as the
cup product by the extension class of the corresponding
short exact  sequence.  We denote the extension class
of the following short exact sequence 
\begin{equation}
\label{eq:14}
0 \to  \cat{S}^{k-1}(TX) \to
\frac{\cat{S}^k(\cat{D}^1 (L))}{\cat{S}^{k-1}(\cat{D}^1 (L))} \to \cat{S}^k(TX) \to 0 
\end{equation}
by $\Gamma_k$.
   
Next, $\Gamma_k$ can be described in terms of 
first Chern class $c_1(L)$ of the line bundle $L$, because
the first Chern class of $c_1(L)$ is nothing  but
the extension class of the following {\bf Atiyah exact
sequence} (see \cite{A})

\begin{equation}
\label{eq:15}
0 \to \struct{X} \to \cat{D}^1 (L) \xrightarrow{ \sigma_1} TX \to 0,
\end{equation}
and the short exact sequence \eqref{eq:8} is 
the $k$-th symmetric power of the Atiyah exact sequence 
\eqref{eq:15}

Thus, the connecting homomorphism $\delta_k$ can be described using the first Chern
class $c_1(L) \in \coh{1}{X}
{ T^* X}$ of the line bundle $L$ over $X$. 
Indeed, 
the cup 
product with $ c_1(L)$ gives rise to a homomorphism
\begin{equation}
\label{eq:16}
\mu: \coh{0}{X}{\cat{S}^k TX} \to \coh{1}{X}{\cat{S}^k TX \otimes T^*X}
\end{equation}
Also, we have a canonical homomorphism of vector bundles
\begin{equation*}
\label{eq:17}
\beta:\cat{S}^k TX \otimes T^*X \to  \cat{S}^{k-1}TX
\end{equation*}
which induces a morphism of  \C-vector spaces
\begin{equation}
\label{eq:18}
\beta^*:\coh{1}{X}{\cat{S}^k TX \otimes T^*X} \to \coh{1}
{X}{\cat{S}^{k-1} TX}.
\end{equation}
So, we get a morphism
\begin{equation}
\label{eq:19}
\tilde{\mu} = \beta^* \circ \mu: \coh{0}{X}{\cat{S}^k T 
X} \to \coh{1}{X}
{\cat{S}^{k-1} TX}.
\end{equation}
Then from the above observation we have $\tilde{\mu} = \delta_k$. 
It is sufficient to show that $\tilde{\mu}$ is injective.

Moreover, we have the  natural projection
\begin{equation}
\label{eq:20}
\eta: T^*X \to X
\end{equation}
and 
\begin{equation}
\label{eq:21}
\eta_* \eta^* \struct{X} = \oplus_{k \geq 0} \cat{S}^k TX.
\end{equation}
Thus, we have 
\begin{equation}
\label{eq:22}
\coh{j}{T^*X}
{\struct{ T^*X}} = \oplus_{k \geq 0}
\coh{j}{X}
{\cat{S}^{k} TX} ~~~ \mbox{for all}~ j \geq 0.
\end{equation}

To compute $\coh{j}{T^*X}{\struct{ T^*X}}$, we will use 
the fact that $X$ is a Hitchin variety. 

Let $g: T^*X \to \C$ be an algebraic function. 
Since the codimension of $\cat{N} \setminus T^*X$ in
$\cat{N}$ is at least $2$, and $\cat{N}$ is normal
noetherian scheme, $g$ extends by Hartog's
theorem to an algebraic function $\hat{g}$ on $\cat{N}$.
Since $\cat{H}$ is surjective proper morphism,
$V$ is normal, and generic fibres of $\cat{H}$ are 
abelian varieties, from Lemma \ref{lemm:1}
we have an isomorphism $$\struct{V} \cong \cat{H}_*\struct{ \cat{N}},$$ and hence there exists a unique algebraic
function
$\tilde{g} : V \to \C$ such that $$\hat{g} = \tilde{g}
\circ \cat{H}.$$

Set $\cat{V} = \mbox{d}( \coh{0}{V}
{\struct{V}}) \subset  \coh{0}{V}
{\Omega^1_{V}}$ the space of all exact algebraic $1$-form.
Define a map 
\begin{equation}
\label{eq:24}
\theta: \coh{0}{T^*X}
{\struct{ T^*X}} \to \cat{V}
\end{equation}
by $g \mapsto d \tilde{g}$, where $\tilde{g}$ is the function
on $V$ which is defined by descent of $g$ as above.  Then 
$\theta$ is an isomorphism.

From \eqref{eq:22} and \eqref{eq:24}, we have
\begin{equation}
\label{eq:25}
\theta:\oplus_{k \geq 0}
\coh{0}{X}
{\cat{S}^{k} TX} \to \cat{V}
\end{equation}
which is an isomorphism.

Now, restrict $\cat{H}$ on $T^*X$, and 
let $T_\cat{H} = T_{T^*X / V} = \SKer{d \cat{H}}$
be the relative tangent sheaf on $T^*X$,
where $$d \cat{H}: T(T^*X) \to \cat{H}^*TV$$ is a morphism 
of bundles.
Now, we use the fact that the vector fields on
$\cat{H}^{-1}(v)$ are constant, therefore, the pulled 
back bundle $\cat{H}^*T^*V$ is identified with $T_\cat{H}$, therefore, we have
$$ \coh{0}{V}{\Omega^1_{V}} \subseteq  \coh{0}{T^*X}
{T_\cat{H}}, $$ and hence from \eqref{eq:25}, we have
an injective homomorphism
\begin{equation}
\label{eq:26}
\nu: \cat{V} = \oplus_{k \geq 0} \theta(
\coh{0}{X}
{\cat{S}^{k} TX}) \to \coh{0}{T^*X}{T_\cat{H}}.
\end{equation}

Consider the morphism 
$$\coh{0}{T^*X}{T_\cat{H}} \to \coh{1}{T^*X}{T_\cat{H} \otimes T^* T^* X}$$ defined 
by taking cup product with the first Chern class $$c_1(\eta^* L) \in  \coh{1}{T^*X}{ T^* T^* X}.$$ Further
using the pairing $$T_\cat{H} \otimes T^* T^*X \to \struct{T^*X},$$ we get a homomorphism
\begin{equation}
\label{eq:27}
\psi: \coh{0}{T^*X}{T_\cat{H}} \to \coh{1}{T^*X}{\struct{T^*X}}.
\end{equation}

Since $c_1(\eta^* L) = \eta^*(c_1 L)$, we have
\begin{equation}
\label{eq:28}
\psi \circ \nu \circ \theta(\omega_k) = \tilde{\mu}(\omega_k),
\end{equation}
for all $\omega_k \in \coh{0}{X}
{\cat{S}^{k} T X}$, because of \eqref{eq:22}.
Since $\nu$ and $\theta$ are injective homomorphisms, it 
is enough to 
show that $\psi|_{\nu(\cat{V})}$ is injective 
homomorphism. 
Let $\omega \in \cat{V} \setminus \{0\}$ be a non-zero
exact $1$-form. Choose $u \in V$ such that $\omega(u) 
\neq 0$. As previously discussed $$\cat{H}^{-1}(u)
\cap T^*X = A_u 
\setminus F_u,$$ where $A_u$ is an abelian variety and 
$F_u$ is 
a subvariety of $A_u$ such that $\mbox{codim}(F_u,A_u) \geq 
2$.
Now, $\psi(\nu(\omega)) \in \coh{1}{T^*X}
{\struct{T^*X}}$ and we have the restriction 
map $$\coh{1}{T^*X}{\struct{T^*X}} \to \coh{1}{\cat{H}^{-1}(u)\cap T^*X}{\struct{\cat{H}^{-1}(u) \cap T^*X}}.$$
Since $ \omega(u) \neq 0$, $\psi(\nu(\omega)) \in 
\coh{1}{\cat{H}^{-1}(u)\cap T^*X}{\struct{\cat{H}^{-1}(u) \cap T^*X}}$.
Because of the following isomorphisms
\begin{equation*}
\label{eq:29}
 \coh{1}{\cat{H}^{-1}(u)\cap T^*X}{\struct{\cat{H}^{-1}(u) \cap T^*X}} \cong  \coh{1}{A_u}{\struct{A_u}} \cong  \coh{0}{A_u}{TA_u},
\end{equation*}
it follows that $\psi(\nu(\omega)) \neq 0$.
This completes the proof.

\end{proof}

\begin{corollary}
\label{cor:1}
Let $X$ be a projective Hitchin variety. Then, for
every line bundle $L$ on $X$ and 
for every $k \geq 0$, we have 

\begin{enumerate}
\item \label{eq:30} \coh{0}{X}{\cat{S}^k(\cat{D}^1 (L))} = \C
\item  \label{eq:31} \coh{0}{X}{\cat{D}^k (L)} = \C
\end{enumerate}

\end{corollary}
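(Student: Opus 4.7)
The plan is to deduce part~(1) directly from Theorem~\ref{thm:1} and to reduce part~(2) to part~(1) via a Poincar\'e--Birkhoff--Witt-type identification of $\cat{D}^k(L)$ with $\cat{S}^k(\cat{D}^1(L))$.

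For (1): because $X$ is a projective (and, as usual, connected) variety over $\C$, one has $\coh{0}{X}{\struct{X}} = \C$. Combined with the isomorphism of Theorem~\ref{thm:1}, this gives $\coh{0}{X}{\cat{S}^k(\cat{D}^1(L))} = \C$ for every $k \geq 0$, proving (1).

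For (2): I would introduce the symmetrization morphism of sheaves
\[
\phi_k \colon \cat{S}^k(\cat{D}^1(L)) \longrightarrow \cat{D}^k(L), \qquad P_1 \cdot P_2 \cdots P_k \longmapsto \tfrac{1}{k!} \sum_{\sigma \in \sym{k}} P_{\sigma(1)} \circ P_{\sigma(2)} \circ \cdots \circ P_{\sigma(k)},
\]
which is well defined by symmetry in the factors $P_i \in \cat{D}^1(L)$. Multiplicativity of the principal symbol under composition yields $\sigma_k \circ \phi_k = \cat{S}^k(\sigma_1)$; and, since the identity operator $\mathbf{1} \in \cat{D}^1(L)$ composes trivially, an averaging argument shows that $\phi_k$ restricts to $\phi_{k-1}$ on the subsheaf $\cat{S}^{k-1}(\cat{D}^1(L)) \subset \cat{S}^k(\cat{D}^1(L))$. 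Hence $\phi_k$ fits into a ladder of short exact sequences, namely \eqref{eq:8} on top and \eqref{eq:4} on the bottom, the right-most vertical arrow being the identity on $\cat{S}^k(TX)$. Noting that $\phi_0 = \mathrm{id}_{\struct{X}}$ and $\phi_1 = \mathrm{id}_{\cat{D}^1(L)}$, an induction on $k$ together with the five-lemma then shows that every $\phi_k$ is an isomorphism of sheaves. In particular
\[
\coh{0}{X}{\cat{D}^k(L)} \cong \coh{0}{X}{\cat{S}^k(\cat{D}^1(L))} = \C,
\]
which is (2).

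The main obstacle will be the verification of the left-hand square of the ladder, i.e.\ that the symmetrization of $\mathbf{1} \cdot P_1 \cdots P_{k-1}$ lies in $\cat{D}^{k-1}(L)$ and equals $\phi_{k-1}(P_1 \cdots P_{k-1})$. This should reduce to a short combinatorial identity: the $\mathbf{1}$ can be inserted in any of $k$ positions without altering the composition, so the $k!^{-1}$-average collapses to the $(k-1)!^{-1}$-average. Alternatively, one may invoke the Poincar\'e--Birkhoff--Witt theorem for the Atiyah Lie algebroid $\cat{D}^1(L)$ in characteristic zero, which directly yields the required filtered isomorphism $\cat{S}^k(\cat{D}^1(L)) \cong \cat{D}^k(L)$ and removes the need for any explicit computation.
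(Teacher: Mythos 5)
Part (1) of your proposal is fine and is exactly what the paper does: Theorem~\ref{thm:1} gives $\coh{0}{X}{\cat{S}^k(\cat{D}^1(L))} \cong \coh{0}{X}{\struct{X}}$, and projectivity (with connectedness) gives $\coh{0}{X}{\struct{X}} = \C$.

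Part (2), however, has a genuine gap: the symmetrization map $\phi_k$ is not well defined on $\cat{S}^k(\cat{D}^1(L))$, because $\cat{S}^k$ here is the symmetric power \emph{over $\struct{X}$} and symmetrized composition is not $\struct{X}$-multilinear. Already for $k=2$ one computes
\begin{equation*}
\tfrac{1}{2}\bigl((fP_1)\circ P_2 + P_2\circ (fP_1)\bigr)
= f\cdot \tfrac{1}{2}\bigl(P_1\circ P_2 + P_2\circ P_1\bigr) + \tfrac{1}{2}\,\sigma_1(P_2)(f)\,P_1 ,
\end{equation*}
so $\phi_2(fP_1\cdot P_2)\neq f\,\phi_2(P_1\cdot P_2)$ in general. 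The appeal to Poincar\'e--Birkhoff--Witt does not repair this: for a Lie algebroid (as opposed to a Lie algebra over a field) PBW only identifies the \emph{associated graded} of the filtered algebra of operators with $\bigoplus_k\cat{S}^k(TX)$; it does not produce a canonical filtered $\struct{X}$-module isomorphism $\cat{S}^k(\cat{D}^1(L))\cong\cat{D}^k(L)$ (the known constructions of such isomorphisms require choosing a global holomorphic connection on the algebroid, which need not exist on a projective $X$). Worse, the claimed isomorphism is simply false in general: for $L=\struct{X}$ the Atiyah sequence splits canonically, so $\cat{S}^k(\cat{D}^1(\struct{X}))\cong\bigoplus_{j=0}^{k}\cat{S}^j(TX)$ is a direct sum, whereas the symbol filtration of $\cat{D}^k(\struct{X})$ is typically non-split (already $\cat{D}^2(\struct{X})/\cat{D}^0(\struct{X})$ on a curve of genus $\geq 2$). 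So your five-lemma ladder cannot be built, and part (2) does not reduce to part (1) this way.

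What the paper does instead is to keep the two sheaves separate but compare their \emph{extensions}: it maps the symbol exact sequence $0\to\cat{D}^{k-1}(L)\to\cat{D}^k(L)\to\cat{S}^k(TX)\to 0$ onto the quotient sequence $0\to\cat{S}^{k-1}(TX)\to\cat{D}^k(L)/\cat{D}^{k-2}(L)\to\cat{S}^k(TX)\to 0$, observes that the resulting connecting homomorphism $\coh{0}{X}{\cat{S}^k(TX)}\to\coh{1}{X}{\cat{S}^{k-1}(TX)}$ is the same cup-product-with-$c_1(L)$ map $\delta_k$ whose injectivity was the content of the proof of Theorem~\ref{thm:1}, and concludes $\coh{0}{X}{\cat{D}^{k-1}(L)}\cong\coh{0}{X}{\cat{D}^k(L)}$ by induction. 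If you want to salvage your write-up, replace the symmetrization argument by this connecting-homomorphism comparison (or prove directly that $\coh{0}{X}{\cat{S}^k(TX)}\to\coh{1}{X}{\cat{D}^{k-1}(L)}$ is injective); the only input you need is the injectivity of $\delta_k$, which is already available from Theorem~\ref{thm:1}.
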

\begin{proof} 
\eqref{eq:30} follows immediately from Theorem \ref{thm:1}.

For \eqref{eq:31}, we need to show that 
the inclusions of $\C$-vector spaces
 \begin{equation}
 \label{eq:32}
 \coh{0}{X}{\struct{X}} \subset \coh{0}{X}{\cat{D}^1 (L)}
 \subset \coh{0}{X}{\cat{D}^k (L)} \subset  \ldots
 \end{equation}
 induced from \eqref{eq:3}, are equal, that is,
\begin{equation}
\label{eq:33}
\coh{0}{X}{\cat{D}^{k-1} (L)}
\cong \coh{0}{X}{\cat{D}^k (L))}
~~~ \mbox{for all}~ k \geq 1.
\end{equation}

For that consider the following commutative 
diagram
\begin{equation}
\label{eq:cd3}
\xymatrix{
0 \ar[r] & \cat{D}^{k-1} (L) \ar[d] \ar[r] & \cat{D}^k (L)
\ar[d] \ar[r]^{\sigma_k} & \cat{S}^k(TX) \ar[d] \ar[r] & 0 \\
0 \ar[r] & \cat{S}^{k-1}(TX) \ar[r] & 
\frac{\cat{D}^k (L)}{\cat{D}^{k-2} (L)} \ar[r] & \cat{S}^k(TX) \ar[r] & 0 
}
\end{equation}
where the top short exact sequence is symbol exact sequence \eqref{eq:4}.
The above commutative diagram gives the following commutative diagram of long exact sequences
\begin{equation}
\label{eq:cd4}
\xymatrix{
\cdots \ar[r] & \coh{0}{X}{\cat{S}^k TX} \ar[d] \ar[r]^{\beta_k} & \coh{1}
{X}{\cat{D}^{k-1} (L)} \ar[d] 
\ar[r] & \cdots  \\
\cdots \ar[r] & \coh{0}{X}{\cat{S}^k TX}       \ar[r]^{\delta_k} & \coh{1}
{X}{\cat{S}^{k-1} TX} 
\ar[r] & \cdots }
\end{equation}
To show \eqref{eq:33}, it is enough to show that  the connecting homomorphism $\beta_k$ is injective for all $k \geq 1$, which is 
equivalent to showing that $\delta_k$ is injective 
for all $k \geq 1$. 
Note that $\delta_k$ is the same connecting homomorphism
arises in the proof of the above Theorem \ref{thm:1},
which is injective. This completes the proof of 
\eqref{eq:31}.
\end{proof}

Let $E$ be a holomorphic vector bundle over a 
Hitchin variety  $X$.
We denote by $\cat{D}^k(E)$ the vector bundle over $X$ associated to  the sheaf $\DIFF[k]{X}{E}{E}$ of $k$-th 
order differential operators on $E$.
We have the {\bf symbol exact sequence} of $E$. 
\begin{equation}
\label{eq:37.1}
0 \to \END{E} 
\xrightarrow{\imath} \cat{D}^1(E)
\xrightarrow{\sigma_1}  TX \otimes 
{\END{E}} \to 0,
\end{equation}
which further reduces to 
the
{\bf Atiyah exact
sequence} (see \cite{A}) of $E$
\begin{equation}
\label{eq:38.1}
0 \to \END{E} \xrightarrow{\imath} \At{E} \xrightarrow{ 
\sigma_1} TX \to 0.
\end{equation}

Tensoring \eqref{eq:38.1} with $\Omega^1_X$ produces a 
short exact sequence

\begin{equation}
\label{eq:38.2}
0 \to \Omega^1_X \otimes \END{E} \xrightarrow{\imath}  \Omega^1_X \otimes \At{E} \xrightarrow{ 
\id{\Omega^1_X} \otimes \sigma_1} \Omega^1_X \otimes TX \to 0.
\end{equation}

Note that $\struct{X}\cdot \text{Id} \, \subset\, \END{TX}\,=\,
\Omega^1_{X}\otimes TX$. Define
$$
\Omega^1_{X}(\cat{A}t'(E))\, :=\, ({\id{\Omega^1_X} \otimes \sigma_1})^{-1}(\struct{X}\cdot \text{Id})\, \subset\,
\Omega^1_{X}(\At{E})\, ,
$$
where $\id{\Omega^1_X} \otimes \sigma_1$ is the projection in \eqref{eq:38.2}. So we have the short exact sequence of sheaves
\begin{equation}\label{eq:38.3}
0\,\longrightarrow\, \Omega^1_{X}(\END{E})\,\longrightarrow\, \Omega^1_{X}(\cat{A}t'(E))
\,\stackrel{\id{\Omega^1_X} \otimes \sigma_1}{\longrightarrow}\,\struct{X}\,\longrightarrow\, 0
\end{equation}
on $X$, where $\Omega^1_{X}(\cat{A}t'(E))$ is constructed above.

For simplicity, we shall denote $\id{\Omega^1_X} \otimes \sigma_1$ by $q$. Now, dualizing the above sequence \eqref{eq:38.3},
we get 
\begin{equation}
\label{eq:38.4}
0\,\longrightarrow\, \struct{X}\, \stackrel{q^*} {\longrightarrow}\, (\Omega^1_{X}(\cat{A}t'(E)))^*
\,{\longrightarrow}\, (\Omega^1_{X}(\END{E}))^*\,\longrightarrow\, 0
\end{equation} 
which is nothing but the following short exact sequence
\begin{equation}
\label{eq:38.5}
0\,\longrightarrow\, \struct{X}\, \stackrel{q^*} {\longrightarrow}\, T{X} \otimes (\cat{A}t'(E))^*
\,\stackrel{\eta} {\longrightarrow}\, T{X}\otimes (\END{E})^*\,\longrightarrow\, 0.
\end{equation}

Further define a sheaf
\begin{equation}
\label{eq:38.6}
\cat{B}(E) = \eta^{-1}(TX)
\end{equation}
Thus, we get the following exact sequence
\begin{equation}
\label{eq:38.7}
0\,\longrightarrow\, \struct{X}\, \stackrel{q^*} {\longrightarrow}\, \cat{B}(E)
\,\stackrel{\eta} {\longrightarrow}\, T{X}\,\longrightarrow\, 0
\end{equation}

which is similar to  \eqref{eq:15}. Now
for the vector bundle $\cat{B}(E)$ associated with $E$,
we have the following theorem which
admits an identical proof to the Theorem \ref{thm:1}.

\begin{theorem}
\label{thm:2}
For $k \geq 1$, and for every holomorphic vector bundle
$E$ over the Hitchin variety $X$, we have 
\begin{equation}
\label{eq:34}
\coh{0}{X}{\struct{X}} \cong \coh{0}{X}{\cat{S}^k(\cat{B}(E))}.
\end{equation}
\end{theorem}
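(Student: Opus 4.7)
The plan is to mirror the proof of Theorem \ref{thm:1} step by step, with the Atiyah sequence \eqref{eq:15} replaced by \eqref{eq:38.7} and the first Chern class $c_1(L)$ replaced by the extension class $\Gamma(E) \in \coh{1}{X}{T^*X}$ of \eqref{eq:38.7}. First I would take $k$-th symmetric powers of \eqref{eq:38.7} to obtain, for every $k \geq 1$, a short exact sequence
\begin{equation*}
0 \to \cat{S}^{k-1}(\cat{B}(E)) \to \cat{S}^k(\cat{B}(E)) \to \cat{S}^k(TX) \to 0,
\end{equation*}
together with the increasing filtration of $\cat{S}^k(\cat{B}(E))$ whose successive quotients are the bundles $\cat{S}^j(TX)$. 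As in Theorem \ref{thm:1}, establishing \eqref{eq:34} then reduces to showing that the natural inclusion $\coh{0}{X}{\cat{S}^{k-1}(\cat{B}(E))} \to \coh{0}{X}{\cat{S}^k(\cat{B}(E))}$ is an isomorphism for every $k \geq 1$.

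Next, chasing the analogues of diagrams \eqref{eq:cd1} and \eqref{eq:cd2} reduces the problem to injectivity, for every $k \geq 1$, of the connecting homomorphism
\begin{equation*}
\delta_k : \coh{0}{X}{\cat{S}^k TX} \to \coh{1}{X}{\cat{S}^{k-1} TX}
\end{equation*}
associated to the $k$-th symmetric power of \eqref{eq:38.7}. Since a connecting homomorphism equals cup product against the corresponding extension class, $\delta_k$ factors as $\beta^* \circ \mu$, where $\mu$ is cup product with $\Gamma(E)$ and $\beta^*$ is induced by the natural contraction $\cat{S}^k TX \otimes T^*X \to \cat{S}^{k-1} TX$, exactly as in \eqref{eq:16}--\eqref{eq:19}.

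The injectivity of this composite then follows the verbatim argument from the end of the proof of Theorem \ref{thm:1}: identify $\bigoplus_{k \geq 0} \coh{0}{X}{\cat{S}^k TX}$ with the space of exact algebraic $1$-forms on $V$ via \eqref{eq:22}--\eqref{eq:25}, embed this into $\coh{0}{T^*X}{T_\cat{H}}$ as in \eqref{eq:26}, post-compose with cup product against the pulled back class $\eta^*\Gamma(E) \in \coh{1}{T^*X}{T^*T^*X}$ followed by the natural pairing into $\coh{1}{T^*X}{\struct{T^*X}}$, and then restrict to a generic fibre $A_u \setminus F_u$ of the Hitchin map $\cat{H}$, where the isomorphisms $\coh{1}{A_u}{\struct{A_u}} \cong \coh{0}{A_u}{TA_u}$ detect nonvanishing via constant vector fields on the abelian variety $A_u$.

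The main obstacle, and essentially the only check needed beyond the line bundle case, is the compatibility of pullback for $\Gamma(E)$: one has to verify that the analogue of the identity $c_1(\eta^* L) = \eta^*(c_1 L)$ used in \eqref{eq:28} still holds for $\Gamma(E)$, and that the pulled back class pairs nontrivially on the abelian fibre with any nonzero exact $1$-form descending from $V$. Since $\cat{B}(E)$ is built from the Atiyah algebroid of $E$ by a functorial construction and $\Gamma(E)$ is the corresponding extension class in $\coh{1}{X}{T^*X}$, this compatibility is a direct consequence of naturality of pullback. Once it is in place, the final step on the abelian fibre is word-for-word identical to that of Theorem \ref{thm:1}, and the theorem follows.
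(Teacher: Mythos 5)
Your proposal is correct and coincides with the paper's approach: the paper gives no separate argument for Theorem \ref{thm:2}, stating only that it admits an identical proof to Theorem \ref{thm:1}, and your write-up is exactly that step-by-step transcription, with the extension class of \eqref{eq:38.7} playing the role of $c_1(L)$. Your added remark that the only point requiring a check is the pullback compatibility of this extension class (the analogue of $c_1(\eta^*L)=\eta^*c_1(L)$), and that it holds by naturality, makes explicit something the paper leaves implicit.
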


 An immediate corollary of the above Theorem \ref{thm:2} is as follows
\begin{corollary}
\label{cor:2}
Let $X$ be a projective Hitchin variety. Then, for
every holomorphic  vector bundle $E$ on $X$  and 
for every $k \geq 1$, we have 
\begin{equation}
\label{eq:36} \coh{0}{X}{\cat{S}^k(\cat{B}(E))} = \C.
\end{equation}
\end{corollary}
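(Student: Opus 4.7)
The plan is to deduce this immediately from Theorem \ref{thm:2} by using the standard fact that a projective connected variety over $\C$ has only constant regular global functions. Concretely, Theorem \ref{thm:2} supplies, for every $k \geq 1$, a canonical isomorphism
\begin{equation*}
\coh{0}{X}{\cat{S}^k(\cat{B}(E))} \,\cong\, \coh{0}{X}{\struct{X}},
\end{equation*}
so the whole content of the corollary is the identification $\coh{0}{X}{\struct{X}} = \C$. Since $X$ is a projective Hitchin variety, it is in particular a smooth projective variety over $\C$ (smoothness is built into the definition of a Hitchin variety), and on such a variety every regular global function is constant, which would give the claim at once.

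The only point that deserves a word is the connectedness of $X$, which is the mild obstacle. If one adopts the convention that "variety" means an irreducible, hence connected, scheme of finite type over $\C$, there is nothing to check. Otherwise, connectedness can be extracted from the data in the definition of a Hitchin variety: the surjective proper morphism $\cat{H}: \cat{N} \to V$ of \eqref{eq:1}, together with Lemma \ref{lemm:1}, yields $\cat{H}_* \struct{\cat{N}} \cong \struct{V}$ and that every fibre of $\cat{H}$ is connected; assuming $V$ is connected (which is the standard situation, e.g.\ a vector space as in the second example), this forces $\cat{N}$ to be connected. Since $T^*X \subset \cat{N}$ is open with $\mbox{codim}(\cat{N} \setminus T^*X,\,\cat{N}) \geq 2$ inside the normal noetherian scheme $\cat{N}$, the open subscheme $T^*X$ remains connected, and hence so does its image $X$ under the bundle projection $T^*X \to X$.

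Once connectedness of $X$ is in hand, $\coh{0}{X}{\struct{X}} = \C$, and combining this with the isomorphism provided by Theorem \ref{thm:2} gives $\coh{0}{X}{\cat{S}^k(\cat{B}(E))} = \C$ for every $k \geq 1$, as required. No further calculation is needed; the corollary is essentially a one-line consequence of Theorem \ref{thm:2} and projectivity.
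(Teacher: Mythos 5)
Your proposal is correct and matches the paper exactly: the paper presents Corollary \ref{cor:2} as an immediate consequence of Theorem \ref{thm:2} combined with $\coh{0}{X}{\struct{X}} = \C$ for a projective variety, which is precisely your argument. Your extra remarks on connectedness are a reasonable bit of added care but do not change the route.
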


\section{Sheaf of holomorphic connections on a holomorphic line bundle}
\label{conn}
As above, let $X$ be a  Hitchin variety and $L$
be a holomorphic line bundle over $X$.

Take the dual of the Atiyah exact sequence 
\eqref{eq:15} 
\begin{equation}
\label{eq:39}
0 \to \Omega^1_{X} 
\xrightarrow{\sigma_1^*} 
\cat{D}^1(L)^* \xrightarrow{\imath^*}   \struct{X} \to 0.
\end{equation}

Consider $\struct{X}$ as trivial line 
bundle $X \times \C$. Let $$s: X \to X \times \C$$ be a
holomorphic section of the trivial line bundle defined by $x \mapsto (x,1)$.

Let $S = \Img{s} \subset X \times \C$ be the image of $s$. 
Consider the inverse image $${\imath^*}^{-1}S \subset 
\cat{D}^1(L)^*,$$ and denote it by $\cat{C}(L)$.
Then for every open subset $U \subset X 
$, a holomorphic section of $\cat{C}(L)|_{U}$ over $U$ gives a holomorphic
splitting of the Atiyah exact sequence \eqref{eq:15}, 
associated to the holomorphic vector bundle $L|
_{U} \to U$. 
 For instance, suppose $\tau: U \to \cat{C}(L)|
_{U}$ is a holomorphic section. Then $\tau$ will be a 
holomorphic section of $\cat{D}^1(L)^*|_{U}$  over $U$, 
because $ \cat{C}(L) =
{\imath^*}^{-1}S \subset \cat{D}^1(L)^*$. Since $ 
 \tau \circ \imath =  \imath^*(\tau) = \id{U}$, so we 
 get a holomorphic  splitting $\tau$ of the Atiyah 
 exact sequence \eqref{eq:15}
 associated to $L|_{U}.$ Thus, $L|_{U}$ admits 
 a holomorphic connection. 
 Conversely, given any splitting of Atiyah exact sequence 
 \eqref{eq:15} over an open subset $U \subset X$, we get a holomorphic section of $\cat{C}(L)|_{U}$.
 
\begin{theorem}
\label{thm:3}
Let $X$ be a projective  Hitchin variety. Then, for
every holomorphic line bundle $L$, we have 
$$\coh{0}{\cat{C}(L)}{\struct{\cat{C}(L)}} = \C.$$
\end{theorem}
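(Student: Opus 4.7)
The plan is to identify the pushforward $p_*\struct{\cat{C}(L)}$, where $p:\cat{C}(L)\to X$ is the projection, as an increasing union of sheaves isomorphic to $\cat{S}^k(\cat{D}^1(L))$, and then invoke Corollary \ref{cor:1}. As explained in the paragraph preceding the theorem, $\cat{C}(L) = (\imath^*)^{-1}(S) \subset \cat{D}^1(L)^*$ is naturally an affine bundle over $X$ modelled on the kernel $\Omega^1_X = T^*X$ of $\imath^*$ in the dual Atiyah sequence \eqref{eq:39}. Filtering regular functions on $\cat{C}(L)$ by their polynomial degree along the fibers of $p$ produces an exhaustive increasing filtration
\begin{equation*}
\struct{X} = F^0 \subset F^1 \subset \cdots \subset F^k \subset \cdots \subset p_*\struct{\cat{C}(L)},
\end{equation*}
with associated graded $F^k / F^{k-1} \cong \cat{S}^k TX$ for every $k \geq 1$.

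The main step is to build a filtered isomorphism $\Phi_k : \cat{S}^k(\cat{D}^1(L)) \xrightarrow{\sim} F^k$. Any $\xi \in \cat{C}(L)$ lying over $x \in X$ is, by construction, a linear functional on $\cat{D}^1(L)_x$ restricting to the identity on the subsheaf $\struct{X} \subset \cat{D}^1(L)$, so for every local section $D$ of $\cat{D}^1(L)$ the assignment $\xi \mapsto \xi(D_{\pi(\xi)})$ is a fiberwise-affine function on $\cat{C}(L)$. Extending by symmetric multiplication gives an $\struct{X}$-linear map
\begin{equation*}
\Phi_k : \cat{S}^k(\cat{D}^1(L)) \longrightarrow F^k, \qquad D_1\cdots D_k \longmapsto \bigl(\xi \mapsto \xi(D_1)\xi(D_2)\cdots\xi(D_k)\bigr),
\end{equation*}
well defined because the target values lie in a commutative ring. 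By construction $\Phi_k$ respects the filtrations \eqref{eq:9} and $\{F^k\}$, acts as the identity on $F^0=\struct{X}$, and on the top quotient $\cat{S}^k(\cat{D}^1(L))/\cat{S}^{k-1}(\cat{D}^1(L)) \cong \cat{S}^k TX$ of \eqref{eq:10} coincides with the fiberwise-leading-coefficient identification. An induction on $k$ via the five lemma then shows that every $\Phi_k$ is an isomorphism.

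Passing to global sections and using that $X$ is quasi-compact, so that $\coh{0}{X}{\cdot}$ commutes with the directed colimit $p_*\struct{\cat{C}(L)} = \varinjlim_k F^k$, we obtain
\begin{equation*}
\coh{0}{\cat{C}(L)}{\struct{\cat{C}(L)}} = \coh{0}{X}{p_*\struct{\cat{C}(L)}} = \varinjlim_k \coh{0}{X}{\cat{S}^k(\cat{D}^1(L))},
\end{equation*}
which equals $\C$ by Corollary \ref{cor:1}.

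The hard part is verifying the filtered isomorphism $F^k \cong \cat{S}^k(\cat{D}^1(L))$: not the identification of the graded pieces, which is essentially formal, but ensuring that at every level the two filtrations present genuinely the same extension. At the bottom step this is the coincidence of two classes in $\coh{1}{X}{\Omega^1_X}$ — the class that defines the affine bundle $\cat{C}(L)$ and the Atiyah class $c_1(L)$ of the extension \eqref{eq:15} defining $\cat{D}^1(L)$ — and the higher steps are then controlled by the symmetric powers of this class, in complete parallel with the analysis of the connecting homomorphisms in the proof of Theorem \ref{thm:1}.
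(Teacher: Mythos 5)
Your proof is correct, and it lands on exactly the same final reduction as the paper --- everything comes down to $\varinjlim_k \coh{0}{X}{\cat{S}^k(\cat{D}^1 (L))} = \C$, i.e.\ Corollary \ref{cor:1} --- but it gets there by a genuinely different mechanism. The paper compactifies: it identifies $\cat{C}(L)$ with $\p(\cat{D}^1(L)) \setminus \p(TX)$, where $\p(TX)$ is the zero locus of the section of $\struct{\p (\cat{D}^1(L))}(1)$ corresponding to $1 \in \struct{X} \subset \cat{D}^1(L)$, and then computes the functions on the complement of that divisor as $\varinjlim_{k} \coh{0}{\p (\cat{D}^1(L))}{\struct{\p (\cat{D}^1(L))}(k)} = \varinjlim_{k} \coh{0}{X}{\cat{S}^k\cat{D}^1(L)}$. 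You instead stay on the affine bundle and exhibit $p_*\struct{\cat{C}(L)}$ as $\varinjlim_k \cat{S}^k(\cat{D}^1(L))$ directly, via the evaluation maps $D_1\cdots D_k \mapsto \prod_i \xi(D_i)$; the point that makes this work, which you correctly isolate, is that $\xi(1)=1$ on $\cat{C}(L)$, so these maps intertwine the inclusions $\cat{S}^{k-1}(\cat{D}^1(L)) \subset \cat{S}^{k}(\cat{D}^1(L))$ of \eqref{eq:9} with the degree filtration, and the five-lemma induction closes the argument. The two computations are equivalent (your $\Phi_k$ is the paper's multiplication-by-the-tautological-section colimit read off on the affine chart $\imath^* = 1$), but your version is more elementary --- no projectivization needed --- and it actually supplies the verification, left implicit in the paper, that the identification holds at the level of the full filtered sheaf and not merely on graded pieces. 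One small remark: your closing paragraph worrying about matching extension classes in $\coh{1}{X}{\Omega^1_X}$ is superfluous, since the explicit $\Phi_k$ together with the five lemma already settles that the two filtrations present the same extensions; nothing further needs to be checked there.
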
 

\begin{proof}
Let 
$\p(\cat{D}^1(L))$ be the projectivization of 
$\cat{D}^1(L)$, that is, $\p(\cat{D}^1(L))$ parametrises hyperplanes in $\cat{D}^1(L)$.
Let $\p{(TX)}$ be the projectivization of the tangent bundle $TX$. 
Notice that $\p(TX)$ is a subvariety of 
$\p (\cat{D}^1(L))$, and  $\p(TX)$ is the zero
locus of the of a section of the tautological line bundle
$\struct{\p (\cat{D}^1(L))}(1)$. Now, observe that
$\cat{C}(L) = \p(\cat{D}^1(L)) \setminus \p(TX)$. Then we have

\begin{equation}
\label{eq:40}
\coh{0}{\cat{C}(L)}{\struct{\cat{C}(L)}} =
\varinjlim_{k} \coh{0}{\p (\cat{D}^1(L))}{\struct{\p (\cat{D}^1(L))}(k)} = \varinjlim_{k} \coh{0}{X}{\cat{S}^k\cat{D}^1(L)}
\end{equation}
\end{proof}
Now, from Corollary \eqref{cor:1} \eqref{eq:30}, we have the 
result.

%\section*{Acknowledgements}  
%The author is deeply grateful to the anonymous referee 
%for carefully going through the manuscript and  very helpful comments to improve it. 

\end{document}